\documentclass[10pt,a4paper,reqno]{amsart}
\usepackage{amssymb}
\usepackage{amsmath}
\usepackage{enumerate}
\usepackage{graphicx}
\usepackage{tjmm}
\usepackage{multirow}

\usepackage[utf8x]{inputenc}

\begin{document}

\title[Randomized orthogonal decomposition]{Mathematical considerations on Randomized Orthogonal Decomposition method for developing twin data models}
\author{Diana A. Bistrian}
\address{
 University Politehnica of Timisoara\newline
\indent Department of Electrical Engineering and Industrial Informatics\newline
\indent Revolutiei Nr.5, 331128 Hunedoara, Romania}
\email{diana.bistrian@upt.ro}

\setcounter{page}{0}% to be added afterwards
\coordinates{14}{2022}{2}{105-115}% to be added afterwards

\subjclass[2010]{ 93A30, 70K75,  65C20.}% AMS Subject Classification (2010)
\keywords{Twin data model, Randomized orthogonal decomposition, Burgers model, Hopf-Cole-Transformation}

\begin{abstract}

This paper introduces the approach of Randomized Orthogonal Decomposition (ROD) for producing twin data models in order to overcome the drawbacks of existing reduced order modelling techniques. When compared to Fourier empirical decomposition, ROD provides orthonormal shape modes that maximize their projection on the data space, which is a significant benefit.
A shock wave event described by the viscous Burgers equation model is used to illustrate and evaluate the novel method.
The new twin data model is thoroughly evaluated using certain criteria of numerical accuracy and computational performance.

\end{abstract}

\maketitle

\section{Introduction}

The discovery of a realistic approximation of the complicated response of raw data by models of low complexity, i.e. reduced order models (ROM), has drawn the attention of researchers in recent years. This is conceivable given that these complex systems are dominated by a variety of underlying patterns with varying contributions in the reconstitution of the data \cite{Holmes1996}. ROM models have different computational properties based on the mathematical strategy used to create them.
A twin data model is a substitute model whose primary function is to replicate the behavior of the original process. The key benefit of coupling the dynamical process with a simplified twin data model is to accurately map the dynamics to timeframes where it suffers from considerable changes and is therefore challenging to examine.

 The most well-known methods for creating ROM models are currently proper orthogonal decomposition (POD), which is based on Fourier empirical decomposition and dynamic mode decomposition (DMD). The principles and disadvantages of these two methods are briefly presented below.

 For purposes of reduced order modelling, numerous practitioners have embraced the POD technique, for exemplification see
 \cite{Kaiser2018, XiaoBis2019, Stefanescu2019, XiaoFang2019, LiKaiser2019, WangFangxin2021, BruntonBook2022}.
 The intrusive model order reduction is usually derived by combining POD with Galerkin projection methods \cite{Bistrian2015, San2014}.
 Because the Galerkin projection is theoretically conducted by arduous computation and requires stabilizing procedures in the course of numerical implementation, this methodology has problems with efficiency and lacks stability \cite{ChKutz2019, MauroyMezicBook2019, Ahmed2020, Iliescu2022}.

  Understanding and refining the DMD technique has received a lot of attention, and various DMD variations have been made available, see \cite{chenoptimal2012, Tu2014, Kutz2016, Noack2016, Erichson2019, Bistrian2017, AhmedSanBistrian2022}.
 Practitioners of modal decomposition frequently debate the choice of DMD modes to be employed for the flow reconstruction \cite{Noack2011, Tissot2014}. The offline part of the procedure takes extra care and the CPU time is increased by specifying a DMD modes'selection criterion \cite{Bistrian2017a}.
 Another disadvantage of the DMD method  is that it doesn't create orthogonal modes, necessitating a significantly large number of modes.

  To overcome the aforementioned problems with the existing approaches, this research presents a novel technique for developing fluid dynamics twin data models. This paper introduces the algorithm of Randomized Orthogonal Decomposition (ROD) for creating twin data models having by definition the smallest error and the highest correlation in relation to the original data.
It will be shown that a key advantage of ROD is the availability of orthonormal shape modes that maximize their projection on the data space in comparison with Fourier empirical decomposition.

The viscous Burgers equation model's description of a shock wave event is used to demonstrate and analyze the new algorithm's performance. The numerical results are presented for a detailed evaluation of the new twin data model utilizing particular criteria such as numerical accuracy and computational efficiency.

The remainder of the article is organized as follows.
Section 2 discusses the mathematical aspects of the randomized orthogonal decomposition approach.
Section 3 presents the test example with the exact mathematical solution.
Section 4 presents ROD's twin data model and conducts a qualitative study of the model.
A summary and conclusions are given in Section 5.

\section{Randomized Orthogonal Decomposition}

Suppose that $D = \left[ {0,L} \right] \subset \mathbb{R}$ represents the computational domain and let the Hilbert space ${L^2}\left( D \right)$  of square integrable functions on $D$
\begin{equation}
{L^2}\left( D \right) = \left\{ {\phi :D \to \mathbb{R}\left| {\int_D {{{\left| \phi  \right|}^2}dx < \infty } } \right.} \right\}
\end{equation}
to be endowed with the inner product
\begin{equation}
{\left\langle {{\phi _i}\left( x \right),{\phi _j}\left( x \right)} \right\rangle _{{L^2}\left( D \right)}} = \int_D {{\phi _i}\left( x \right)} \,{\phi _j}\left( x \right)\,dx\quad for\;{\phi _i},{\phi _j} \in {L^2}\left( D \right)
\end{equation}
and the induced norm ${\left\| \phi  \right\|_{{L^2}\left( D \right)}} = \sqrt {{{\left\langle {\phi ,\phi } \right\rangle }_{{L^2}(D)}}} $  for $\phi  \in {L^2}\left( D \right)$.

The data ${u_i}\left( {x,t} \right) = u\left( {x,{t_i},} \right),\;{t_i} = i\Delta t,\;i = 0,...,{N_t}$, represent measurements at the constant sampling time $\Delta t$, $x$ representing the Cartesian spatial coordinate.

The data matrix whose columns represent the individual data samples is called \textit{the snapshot matrix}
\begin{equation}
V = \left[ {\begin{array}{*{20}{c}}
{{u_0}}&{{u_1}}&{...}&{{u_{N_t}}}
\end{array}} \right] \in {\mathbb{R}^{{N_x} \times ({N_t} + 1)}}.
\end{equation}
Each column ${u_i} \in {\mathbb{R}^{N_x}}$ is a vector with ${N_x}$ components, representing the spatial measurements corresponding to the ${N_t} + 1$ time instances.

\begin{proposition} \textbf{(Fourier Empirical Orthogonal Decomposition)}
Let
\[V = \left[ {\begin{array}{*{20}{c}}
{{u_0}}&{{u_1}}&{...}&{{u_{N_t}}}
\end{array}} \right] \in {\mathbb{R}^{{N_x} \times ({N_t} + 1)}}\]
be a real-valued data matrix of rank $r \le \min \left( {{N_x},{N_t} + 1} \right)$, whose columns ${u_j} \in {\mathbb{R}^{{N_x}}}$, $j = 1,...,{N_t} + 1$ are data snapshots.
The Singular Value Decomposition (SVD) yields the factorization
\begin{equation}\label{f1}
V = \Psi \Sigma {\Phi ^T}
\end{equation}
%\[V = \Psi \Sigma {\Phi ^T}\]
%
of the matrix $V$, where $\Psi  = \left[ {{\psi _1},...,{\psi _{{N_x}}}} \right] \in {\mathbb{R}^{{N_x} \times {N_x}}}$ and
$\Phi  = \left[ {{\varphi _1},...,{\varphi _{{{N_t} + 1}}}} \right] \in {\mathbb{R}^{({N_t} + 1) \times ({N_t} + 1)}}$ are orthogonal matrices,
\[\Sigma  = \left( {\begin{array}{*{20}{c}}
D&0\\
0&0
\end{array}} \right) \in {\mathbb{R}^{{N_x}  \times ({N_t} + 1)}},\]
with $D = diag\left( {{\sigma _1},...,{\sigma _r}} \right) \in {\mathbb{R}^{r \times r}}$
and ${\sigma _1} \ge {\sigma _2} \ge ... \ge {\sigma _r} > 0.$
Then $\left\{ {{\psi _i}} \right\}_{i = 1}^r$ and $\left\{ {{\varphi _i}} \right\}_{i = 1}^r$ represent the eigenvectors of
$V{V^T}$ and ${V^T}V$, respectively, with eigenvalues ${\lambda _i} = \sigma _i^2 > 0$ for $i = 1,...,r$, due to the relations
\[V{\varphi _i} = {\sigma _i}{\psi _i}\quad and\quad {V^T}{\psi _i} = {\sigma _i}{\varphi _i}\quad for\;i = 1,...,r.\]

It follows that
\begin{equation}\label{f3}
\begin{array}{l}
{u_j} = {\sum\limits_{i = 1}^r {\left[ {D{\Phi ^T}} \right]} _{ij}}{\psi _i} = {\sum\limits_{i = 1}^r {\left[ {\overbrace {{\Psi ^T}\Psi }^{ = I \in {\mathbb{R}^{r \times r}}}D{\Phi ^T}} \right]} _{ij}}{\psi _i}\\
\quad  = {\sum\limits_{i = 1}^r {\left[ {{\Psi ^T}V} \right]} _{ij}}{\psi _i} = \sum\limits_{i = 1}^r {{{\left\langle {{u_j},{\psi _i}} \right\rangle }_{{L^2}(D)}}} {\psi _i}.
\end{array}
\end{equation}

In the case of linearly-independent snapshots,
the empirical orthogonal decomposition involves a number of terms equal to $r = \min \left( {{N_x},{N_t} + 1} \right)$, which can be a large number.
By convention, let suppose that  ${\rm{r = min}}\left( {{N_x},{N_t} + 1} \right) = {N_x}$. The following model
\begin{equation}\label{fullF}
u^{Fourier}\left( {x,t} \right) = \sum\limits_{i = 1}^{{N_x}} {{a_i}\left( t \right){\psi _i}\left( x \right)} ,\quad {a_i}\left( t \right) = {\left\langle {u,{\psi _i}} \right\rangle _{{L^2}\left( D \right)}}
\end{equation}
is called \textit{Fourier empirical orthogonal decomposition}.

\end{proposition}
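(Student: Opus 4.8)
The plan is to derive the factorization (\ref{f1}) from the spectral theorem applied to the Gram matrix $V^T V$, and then to obtain the column expansion (\ref{f3}) by reading off the columns of the assembled product. First I would note that $V^T V \in \mathbb{R}^{(N_t+1)\times(N_t+1)}$ is symmetric and positive semidefinite, since $x^T V^T V x = \|Vx\|^2 \ge 0$; the spectral theorem then supplies an orthonormal eigenbasis $\{\varphi_i\}_{i=1}^{N_t+1}$ with real eigenvalues $\lambda_1 \ge \cdots \ge \lambda_{N_t+1} \ge 0$. Because $\operatorname{rank}(V^T V) = \operatorname{rank}(V) = r$ and the rank of a symmetric matrix equals its number of nonzero eigenvalues, exactly the first $r$ eigenvalues are strictly positive, and I would set $\sigma_i = \sqrt{\lambda_i}$ for $i = 1,\dots,r$.

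Next I would build the left singular vectors and verify the coupling relations. For $i = 1,\dots,r$ I set $\psi_i = \sigma_i^{-1} V\varphi_i$; orthonormality follows from
\[
\psi_i^T\psi_j = \frac{1}{\sigma_i\sigma_j}\,\varphi_i^T V^T V\varphi_j = \frac{\lambda_j}{\sigma_i\sigma_j}\,\delta_{ij} = \delta_{ij}.
\]
The definition gives $V\varphi_i = \sigma_i\psi_i$ at once, and $V^T\psi_i = \sigma_i^{-1} V^T V\varphi_i = \sigma_i\varphi_i$. Substituting one relation into the other yields $VV^T\psi_i = \sigma_i V\varphi_i = \sigma_i^2\psi_i$ and $V^T V\varphi_i = \sigma_i V^T\psi_i = \sigma_i^2\varphi_i$, so $\psi_i$ and $\varphi_i$ are eigenvectors of $VV^T$ and $V^T V$ with the common eigenvalue $\lambda_i = \sigma_i^2 > 0$, as claimed.

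To assemble (\ref{f1}) I would extend $\{\psi_i\}_{i=1}^r$ to an orthonormal basis $\{\psi_i\}_{i=1}^{N_x}$ of $\mathbb{R}^{N_x}$ by Gram--Schmidt; the added vectors automatically span the complement of the range of $V$, consistently with the zero block of $\Sigma$. Since $\lambda_i = 0$ forces $\|V\varphi_i\|^2 = \varphi_i^T V^T V\varphi_i = 0$ for $i > r$, stacking the column relations gives $V\Phi = \Psi\Sigma$, and right-multiplying by the orthogonal $\Phi^T$ produces $V = \Psi\Sigma\Phi^T$. For (\ref{f3}) I would take the $j$-th column of $\Psi\Sigma\Phi^T$: as $\Sigma$ vanishes outside its leading $r\times r$ block $D$, only $\psi_1,\dots,\psi_r$ contribute, so $u_j = \sum_{i=1}^r [D\Phi^T]_{ij}\psi_i$. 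Writing $\Psi_r$ and $\Phi_r$ for the first $r$ columns of $\Psi$ and $\Phi$, the relations $\Psi_r^T\Psi_r = I_r$ and $\Psi_r D\Phi_r^T = V$ give $D\Phi_r^T = \Psi_r^T V$, hence the coefficient equals $[\Psi^T V]_{ij} = \psi_i^T u_j$.

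The step demanding the most care is the last identification $\psi_i^T u_j = \langle u_j,\psi_i\rangle_{L^2(D)}$, which turns (\ref{f3}) into the continuous model (\ref{fullF}). The left-hand side is the Euclidean inner product of the sampled vectors in $\mathbb{R}^{N_x}$, whereas the right-hand side is the $L^2(D)$ pairing of the underlying functions; the two agree only under the standing discretization convention that a snapshot is identified with the function it samples via the quadrature on the spatial grid (up to a constant weight on a uniform mesh). I would state this correspondence explicitly, so that the discrete coefficients $\psi_i^T u_j$ are read as the modal amplitudes $a_i(t_j) = \langle u,\psi_i\rangle_{L^2(D)}$. Taking $r = N_x$ in the linearly independent case and letting $j$ sweep the sampled times then delivers the Fourier empirical orthogonal decomposition (\ref{fullF}).
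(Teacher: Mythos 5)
Your proof is correct, and its core --- reading off the $j$-th column of $\Psi\Sigma\Phi^T$ and rewriting the coefficient matrix $D\Phi^T$ as $\Psi^T V$ so that the coefficients become $\psi_i^T u_j$ --- is exactly the computation the paper performs in (\ref{f3}). Where you differ is in what you take as given: the paper invokes the SVD as a known factorization and simply states the coupling relations $V\varphi_i = \sigma_i\psi_i$ and $V^T\psi_i = \sigma_i\varphi_i$, whereas you derive the whole factorization from the spectral theorem applied to $V^TV$, constructing $\psi_i = \sigma_i^{-1}V\varphi_i$, checking orthonormality, verifying the eigenvector properties of $VV^T$ and $V^TV$, and extending to a full orthonormal basis so that the zero block of $\Sigma$ is accounted for. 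That makes your argument self-contained at the cost of length; the paper's version is shorter but rests on the SVD as a black box. Your final paragraph is a genuine improvement over the paper's presentation: the identity $\left[\Psi^T V\right]_{ij} = \left\langle u_j, \psi_i\right\rangle_{L^2(D)}$ equates a Euclidean inner product of sample vectors in $\mathbb{R}^{N_x}$ with an $L^2(D)$ pairing of functions, and the paper asserts it without comment. As you note, it holds only under the convention that identifies each snapshot with the function it samples and the discrete inner product with a quadrature of the $L^2$ one (the uniform-mesh weight $\Delta x$ being absorbed consistently into the normalization of the $\psi_i$); making that convention explicit is what allows the discrete expansion (\ref{f3}) to be read as the continuous model (\ref{fullF}).
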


It is more convenient to seek an orthonormal base of functions
\begin{equation}
 \Phi  = \left\{ {{\phi _1},{\phi _2},...} \right\},\quad {\left\langle {{\phi _i}\left( x \right),{\phi _j}\left( x \right)} \right\rangle _{{L^2}\left( D \right)}} = {\delta _{ij}},\quad {\left\| \phi  \right\|_{{L^2}\left( D \right)}} = 1,
\end{equation}
where ${\delta _{ij}}$ is the Kronecker delta symbol, consisting of a minimum number of functions ${{\phi _i}\left( x \right)}$, such that the approximation of $u\left( {x,t} \right)$ through this base is as good as possible, in order to create a twin data model of reduced complexity.

The twin data model (DTM) at every time step $\left\{{{{\rm{t}}_1}{\rm{,}}...{\rm{,}}{{\rm{t}}_{N_t}}} \right\}$ is written according to the following relation:
\begin{equation}\label{dtm}
{u^{DTM}}\left( {x,{t_i}} \right) = \sum\limits_{j = 1}^{{N_{DTM}}} {\underbrace {{a_j}\left( {{t_i}} \right)}_{Modal\;amplitudes}\overbrace {{\phi _j}\left( x \right)}^{Leading\;shape\;modes},} \;\;{t_i} \in \left\{ {{{\rm{t}}_1},...,{{\rm{t}}_{{N_t}}}} \right\},
\end{equation}
where ${\phi _j} \in \mathbb{C}$ represent the $\Phi $ base functions, which we call \textit{the leading shape modes}, ${N_{DTM}} \ll \min \left( {{N_x},{N_t} + 1} \right)$ represents the number of terms in the representation (\ref{dtm}) which we impose to be minimal and ${a_j}\left( {{t_i}} \right) $ represent the modal growing amplitudes.

Determination of the optimal decomposition (\ref{dtm}) then amounts to finding the solution to the following multiobjective constrained optimization problem:
\begin{equation}\label{optimprob}
\begin{array}{*{20}{l}}
{\mathop {\min }\limits_{{\phi _j},{a_j},{N_{DTM}}} \sum\limits_{i = 1}^{{N_x}} {} \int\limits_D {\left\| {u\left( {x,{t_i}} \right) - \sum\limits_{j = 1}^{{N_{DTM}}} {{a_j}\left( {{t_i}} \right){\phi _j}\left( x \right)} } \right\|_{{L^2}\left( D \right)}^2dx} ,}\\
{\mathop {\min }\limits_{{\phi _j},{a_j},{N_{DTM}}} \sum\limits_{i = 1}^{{N_x}} {} \int\limits_D {\frac{{ - \left\| {u\left( {x,{t_i}} \right)\sum\limits_{j = 1}^{{N_{DTM}}} {{a_j}\left( {{t_i}} \right){\phi _j}\left( x \right)} } \right\|_{{L^2}\left( D \right)}^2}}{{\left\| {u{{\left( {x,{t_i}} \right)}^H}u\left( {x,{t_i}} \right)} \right\|_{{L^2}\left( D \right)}^2\left\| {{{\left( {\sum\limits_{j = 1}^{{N_{DTM}}} {{a_j}\left( {{t_i}} \right){\phi _j}\left( x \right)} } \right)}^H}\sum\limits_{j = 1}^{{N_{DTM}}} {{a_j}\left( {{t_i}} \right){\phi _j}\left( x \right)} } \right\|_{{L^2}\left( D \right)}^2}}} dx,}\\
{s.t.\quad {{\left\langle {{\phi _i},{\phi _j}} \right\rangle }_{{L^2}\left( D \right)}} = {\delta _{ij}},\quad {{\left\| \phi_{i}  \right\|}_{{L^2}\left( D \right)}} = 1,\quad 1 \le i \le j \le {N_{DTM}}}
\end{array}
\end{equation}
where $H$ denotes the conjugate transpose of the snapshot  containing the data.

\begin{definition} \textbf{(The Projection Operator)}

Let
\[{V_0} = \left[ {\begin{array}{*{20}{c}}
{{u_0}}&{{u_1}}&{...}&{{u_{{N_t} - 1}}}
\end{array}} \right] \equiv \left\{ {u_i^0} \right\}_{i = 0}^{{N_t} - 1} \in {\mathbb{R}^{{N_x} \times {N_t}}}\]
be a real-valued data matrix, whose columns are data snapshots.

Let
\[\Phi  = \left\{ {{\phi _1},{\phi _2},...,{\phi _{{N_{DTM}}}}} \right\}\]
be the shape modes base.

We define the bounded projection operator ${P_{{V_0}}}\Phi$ , that maps every shape mode $\left\{ {{\phi _i}} \right\}_{i = 1}^{{N_{DTM}}}$ onto its projection on the data vectors
$\left\{ {u_i^0} \right\}_{i = 0}^{{N_t} - 1}$ along the computational domain direction $D$:

\begin{equation}\label{proj}
{P_{{V_0}}}\Phi \left( {{\phi _i},u_j^0} \right) \equiv {P_{u_j^0}}{\phi _i} = \left( {{{\left\langle {{\phi _i},u_j^0} \right\rangle }_{{L^2}\left( D \right)}}/{{\left\langle {u_j^0,u_j^0} \right\rangle }_{{L^2}\left( D \right)}}} \right)u_j^0.
\end{equation}

\end{definition}

\begin{proposition} \textbf{(Randomized Singular Value Decomposition of rank $k$)}\label{Randsvd}

Let
\[{{V_{0}} = \left[ {\begin{array}{*{20}{c}}
{{u_0}}&{{u_1}}&{...}&{{u_{{N_t} - 1}}}
\end{array}} \right] \in {\mathbb{R}^{{N_x} \times {N_t}}}}\]
be a real-valued data matrix, whose columns are data snapshots.
If we impose a target rank ${k < \min \left( {{N_x},{N_t}} \right)}$, the Randomized Singular Value Decomposition of rank $k$ ($k$-RSVD) produces $k$ left singular vectors of $V_{0}$ and  has the following steps:
\begin{enumerate}

\item[1.] Generate a Gaussian random test matrix $M$ of size ${N_t} \times k$.

\item[2.] Compute a compressed sampling matrix by multiplication of data matrix with random matrix $Q = V_{0}M$.

\item[3.] Project the data matrix to the smaller space $P = {Q^H}V_{0}$, where $H$ denotes the conjugate transpose.

\item[4.] Produce the economy-size singular value decomposition of low-dimensional data matrix $\left[ {T,\Sigma ,W} \right] = svd\left( P \right)$.

\item[5.] Compute the right singular vectors $U = QT$,
 $U \in {\mathbb{R}^{{N_x} \times k}}$, $\Sigma  \in {\mathbb{R}^{k \times k}}$, $W \in {\mathbb{R}^{{N_t}
\times k}}$.

\end{enumerate}

\end{proposition}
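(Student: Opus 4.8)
The plan is to verify that the five steps compose into a genuine (reduced) singular value decomposition of the compression $QQ^{H}V_{0}$ of the data matrix, and then to show that this compression reproduces the dominant part of $V_{0}$, so that the columns of $U$ are, up to a controllable error, the leading left singular vectors of $V_{0}$. Accordingly I would split the argument into an exact algebraic part and an approximation part.

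First I would chase the identities through the steps. Substituting step 5 into the reconstruction and using the factorization $P=T\Sigma W^{H}$ obtained in step 4, one gets
\begin{equation}
U\Sigma W^{H}=(QT)\Sigma W^{H}=Q\,(T\Sigma W^{H})=QP=Q\,Q^{H}V_{0}.
\end{equation}
Thus the triple $(U,\Sigma,W)$ reconstructs exactly the projection of $V_{0}$ onto $\operatorname{range}(Q)$. It then remains to see that this triple is itself an SVD. Since step 4 is an economy SVD, $\Sigma=\operatorname{diag}(\sigma_{1},\dots,\sigma_{k})$ with $\sigma_{1}\ge\dots\ge\sigma_{k}\ge0$ and $W$ has orthonormal columns by construction. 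For $U=QT$ to have orthonormal columns one needs $U^{H}U=T^{H}(Q^{H}Q)T=I$, i.e. $Q^{H}Q=I$; this is the point at which the sampling matrix $Q$ must be replaced by an orthonormal basis of its range (a QR step), and I would state this orthonormalization explicitly as part of step 2. Granting it, $(U,\Sigma,W)$ is a reduced SVD of $QQ^{H}V_{0}$, and in particular the columns of $U$ are exactly the left singular vectors of $QQ^{H}V_{0}$.

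Next I would show that $QQ^{H}V_{0}$ captures the leading left singular subspace of $V_{0}$. Writing the full SVD $V_{0}=\Psi\Sigma_{0}\Phi^{H}$ in the spirit of the first proposition, one has $Q=V_{0}M=\Psi\Sigma_{0}(\Phi^{H}M)$, where $\Phi^{H}M$ is an $N_{t}\times k$ Gaussian matrix. When $\operatorname{rank}(V_{0})=k$ the leading block of $\Phi^{H}M$ is invertible with probability one, so $\operatorname{range}(Q)=\operatorname{range}(V_{0})$ and the projection satisfies $QQ^{H}V_{0}=V_{0}$; then $U=[\psi_{1},\dots,\psi_{k}]$ up to an orthogonal rotation within each singular subspace. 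When $\operatorname{rank}(V_{0})>k$, $\operatorname{range}(Q)$ is a random $k$-dimensional subspace aligned with the dominant left singular directions, and I would quantify the defect $\left\|V_{0}-QQ^{H}V_{0}\right\|$ in terms of the tail singular values of $V_{0}$, invoking the standard expectation and tail estimates for randomized range finders.

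The main obstacle is precisely this last, probabilistic step: the algebra of the reconstruction identity is routine, whereas making rigorous the statement that $\operatorname{range}(V_{0}M)$ approximates the dominant $k$-dimensional left singular subspace requires the nontrivial estimate on Gaussian projections and, to obtain a sharp bound, possibly an oversampling parameter or a few power iterations. A secondary but genuine subtlety, noted above, is that the proposition as stated omits the orthonormalization of $Q$; without it $U$ need not have orthonormal columns, so the claim that the output furnishes left singular vectors becomes correct only once that step is inserted.
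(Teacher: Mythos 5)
You should know at the outset that the paper offers no proof of this proposition at all: it is stated as a bare algorithm (in effect a definition of $k$-RSVD), so there is no argument of the author's to compare yours against. Judged on its own terms, your algebraic part is correct and is the natural route: chasing the steps gives $U\Sigma W^{H}=QT\Sigma W^{H}=QP=QQ^{H}V_{0}$ exactly. Your central criticism is also well founded and identifies a genuine defect in the statement as printed: since step 2 sets $Q=V_{0}M$ with no orthonormalization, $Q^{H}Q\neq I$ in general, hence $U^{H}U=T^{H}\left(Q^{H}Q\right)T\neq I$, the matrix $QQ^{H}$ is not an orthogonal projector, and the columns of $U=QT$ need not be orthonormal --- so they cannot literally be ``$k$ left singular vectors of $V_{0}$''. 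The standard randomized SVD (Halko--Martinsson--Tropp, and the randomized DMD algorithms in the paper's own references) interposes a QR factorization of $V_{0}M$ before the projection step, exactly as you propose; with that correction your exact algebra goes through. Note also that this omission propagates into the paper's Theorem \ref{Th1}, whose proof writes $V_{0}=U\Sigma W^{H}$ with $U$, $W$ orthogonal, treating the $k$-RSVD output as an exact SVD of $V_{0}$.

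Your second point is equally pertinent: even after orthonormalization, $(U,\Sigma,W)$ is an exact reduced SVD only of $QQ^{H}V_{0}$, and its columns coincide (almost surely, up to rotations within degenerate singular subspaces) with left singular vectors of $V_{0}$ only when $\mathrm{rank}(V_{0})\le k$; otherwise they are approximations whose quality is governed by the tail singular values $\sigma_{k+1},\sigma_{k+2},\dots$ through the Gaussian range-finder estimates, possibly sharpened by oversampling or power iterations. That probabilistic step is the only genuinely nontrivial mathematics behind the claim, the paper is entirely silent on it, and your sketch correctly isolates it while acknowledging you have not carried it out. In short, your proposal does not so much contain a gap as expose two gaps in the proposition itself; to turn it into a complete proof you would either need to execute the Gaussian subspace estimates you invoke, or restrict the claim to the exact-rank case $\mathrm{rank}(V_{0})\le k$, where your invertibility argument already suffices.
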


\begin{theorem} \textbf{(Randomized Orthogonal Decomposition: ROD)}\label{Th1}

Let
\begin{equation}
\begin{array}{l}
{V_0} = \left[ {\begin{array}{*{20}{c}}
{{u_0}}&{{u_1}}&{...}&{{u_{{N_t} - 1}}}
\end{array}} \right] \equiv \left\{ {u_i^0} \right\}_{i = 0}^{{N_t} - 1} \in {\mathbb{R}^{{N_x} \times {N_t}}},\;\\
{V_1} = \left[ {\begin{array}{*{20}{c}}
{{u_1}}&{{u_2}}&{...}&{{u_{{N_t}}}}
\end{array}} \right] \equiv \left\{ {u_i^1} \right\}_{i = 1}^{{N_t}} \in {\mathbb{R}^{{N_x} \times {N_t}}},
\end{array}
\end{equation}
two time-shifted data matrices with rank ${\rm{r }} \le {\rm{ min}}\left( {{N_x},{N_t}} \right)$, whose columns are data snapshots.
Then for $1 \le {N_{DTM}} \le r$, the optimization problem
\begin{equation}\label{optmodes}
\begin{array}{*{20}{l}}
{\quad \quad \quad \mathop {\max }\limits_{{\phi _1},...,{\phi _{{N_{DTM}}}}} \sum\limits_{i = 1}^{{N_{DTM}}} {\sum\limits_{j = 1}^{{N_t}} {\left\| {{P_{u_j^0}}{\phi _i}} \right\|_{{L^2}\left( D \right)}^2} } \quad }\\
{s.t.\quad {{\left\langle {{\phi _i},{\phi _j}} \right\rangle }_{{L^2}\left( D \right)}} = {\delta _{ij}},\quad {{\left\| {{\phi _i}} \right\|}_{{L^2}\left( D \right)}} = 1,\quad 1 \le i \le j \le {N_{DTM}}}
\end{array}
\end{equation}
is being solved by the subspace $span\left\{ {{\phi _1},{\phi _2},...,{\phi _{{N_{DTM}}}}} \right\}$ spanned by the sequence of orthonormal functions
\begin{equation}\label{modesdef}
\left\{ {{\phi _i}} \right\}_{i = 1}^{{N_{DTM}}} = {\left\langle {U,{X_{.,i}}} \right\rangle _{{L^2}\left( D \right)}}/{\left\| {\left\langle {U,{X_{.,i}}} \right\rangle } \right\|_{{L^2}\left( D \right)}}
\end{equation}
where $U$ represents the matrix of left singular vectors produced by Randomized Singular Value Decomposition of rank $N_{DTM}$ of data matrix $V_{0}$ and $X$ denotes the eigenvectors to the Koopman propagator operator $\mathcal A$, i.e.
${u_{{N_t}}} = {{\mathcal A}^{{N_t}}}{u_0}$.
%%%

%
\end{theorem}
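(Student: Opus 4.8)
The plan is to reduce the multi-mode projection-energy functional of (\ref{optmodes}) to a single trace maximization over the correlation operator of the data, to solve that by the Rayleigh--Ritz (Ky Fan) principle, and then to identify the maximizing subspace with the range produced by the $N_{DTM}$-RSVD of $V_0$ dressed by the Koopman eigenvectors. First I would simplify the objective: by the definition of the projection operator (\ref{proj}),
\[
\left\| P_{u_j^0}\phi_i \right\|_{L^2(D)}^2 = \frac{\left| \langle \phi_i, u_j^0\rangle_{L^2(D)}\right|^2}{\langle u_j^0,u_j^0\rangle_{L^2(D)}},
\]
so that, writing $\Phi = [\phi_1,\dots,\phi_{N_{DTM}}]$ and introducing the weighted correlation operator
\[
\mathcal{C} = \sum_{j=1}^{N_t} \frac{u_j^0\,(u_j^0)^{H}}{\langle u_j^0,u_j^0\rangle_{L^2(D)}},
\]
the functional becomes $\sum_{i} \langle \phi_i, \mathcal{C}\phi_i\rangle_{L^2(D)} = \mathrm{trace}\!\left(\Phi^{H}\mathcal{C}\,\Phi\right)$. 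The operator $\mathcal{C}$ is symmetric positive semidefinite, so this is a standard trace form subject only to $\Phi^{H}\Phi = I$.

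Next I would invoke the Rayleigh--Ritz / Ky Fan maximum principle: among all orthonormal systems of $N_{DTM}$ functions, $\mathrm{trace}(\Phi^{H}\mathcal{C}\Phi)$ is maximized exactly when $\mathrm{span}\{\phi_1,\dots,\phi_{N_{DTM}}\}$ is the invariant subspace of $\mathcal{C}$ associated with its $N_{DTM}$ largest eigenvalues, the maximal value being the sum of those eigenvalues. Because $\mathcal{C}$ coincides, up to the per-column weights, with the empirical correlation operator $V_0 V_0^{H}$ (and equals it exactly after the customary snapshot normalization $\|u_j^0\|=1$), its dominant invariant subspace is spanned by the leading left singular vectors of $V_0$. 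By Proposition~\ref{Randsvd} the matrix $U$ delivered by the $N_{DTM}$-RSVD furnishes an orthonormal basis of (an approximation of) this subspace, with the approximation becoming exact for well-separated singular values. Thus any orthonormal basis of $\mathrm{range}(U)$ already attains the maximum, and it remains to verify that the Koopman-dressed modes in (\ref{modesdef}) constitute such a basis.

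To that end I would use the time-shift relation $u_{N_t} = \mathcal{A}^{N_t}u_0$, equivalently $V_1 = \mathcal{A} V_0$, to form the reduced propagator $\tilde{\mathcal{A}} = U^{H}\mathcal{A}U$ and its eigenvectors $X$, and set $\phi_i = \langle U, X_{.,i}\rangle_{L^2(D)}/\|\langle U, X_{.,i}\rangle\|_{L^2(D)} = U X_{.,i}/\|U X_{.,i}\|$. Since the columns of $U$ are orthonormal, $U$ acts as an isometry on its range, so $\|U X_{.,i}\| = \|X_{.,i}\|$ and $\langle \phi_i,\phi_j\rangle = \langle X_{.,i},X_{.,j}\rangle/(\|X_{.,i}\|\,\|X_{.,j}\|)$. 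Hence each $\phi_i$ lies in $\mathrm{range}(U)$ and automatically has unit norm, and the family is an orthonormal basis of $\mathrm{range}(U)$ precisely when the eigenvectors $X_{.,i}$ are mutually orthogonal. Combined with the previous step, this shows that (\ref{modesdef}) solves (\ref{optmodes}).

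The main obstacle is exactly this last point: the orthonormality constraint of (\ref{optmodes}) forces $\langle X_{.,i},X_{.,j}\rangle = 0$ for $i\neq j$, which holds only when the reduced propagator $\tilde{\mathcal{A}}$ is normal. I would therefore either (i) restrict to the regime in which $\tilde{\mathcal{A}}$ is (approximately) normal — natural here, since $U$ already diagonalizes the dominant correlation structure — or (ii) argue at the level of subspaces, noting that $X$ is invertible, so $\mathrm{span}\{U X_{.,1},\dots,U X_{.,N_{DTM}}\} = \mathrm{range}(U)$ and the Koopman eigenvectors merely rotate the basis inside the already-optimal subspace without changing the value of the trace functional. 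Passing from \emph{``the span is optimal''} to \emph{``the explicitly constructed $\phi_i$ are an admissible orthonormal maximizer''} is the delicate transition the argument must handle with care; the secondary point to dispatch cleanly is the reduction of the weighted operator $\mathcal{C}$ to $V_0 V_0^{H}$ so that the RSVD of $V_0$ is indeed the correct object.
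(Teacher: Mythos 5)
Your proposal takes a genuinely different route from the paper, and on the central question it is the more substantive of the two. The paper's proof never engages the variational problem (\ref{optmodes}) directly: it follows the DMD template --- postulate the Koopman propagator $\mathcal{A}$ with $V_1=\mathcal{A}V_0$, pose the least-squares problem $\min_{\mathcal{S}}\|V_1-V_0\mathcal{S}\|_2$, factor $V_0=U\Sigma W^H$ by the $N_{DTM}$-RSVD of Proposition \ref{Randsvd}, obtain the reduced operator $\mathcal{S}=U^H V_1 W\Sigma^{-1}$ (which is exactly your $\tilde{\mathcal{A}}=U^H\mathcal{A}U$), take its eigenvectors $X$, and set $\phi_i=\langle U,X_{.,i}\rangle$ --- and then \emph{asserts}, without argument, that these $\phi_i$ are orthonormal and solve the optimization problem. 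Your reduction of the objective to $\mathrm{trace}\left(\Phi^H\mathcal{C}\Phi\right)$ with $\mathcal{C}=\sum_j u_j^0 (u_j^0)^H/\langle u_j^0,u_j^0\rangle_{L^2(D)}$, followed by the Ky Fan / Rayleigh--Ritz principle, supplies precisely the missing link between the RSVD subspace and the objective functional; this is what a proof of optimality actually requires, and the paper does not contain it.

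The difficulties you flag are genuine, and the paper does not resolve them either --- it steps over them silently. First, orthonormality: the eigenvector matrix $X$ of $\mathcal{S}$ has orthogonal columns only when $\mathcal{S}$ is normal, which a projected DMD-type operator generally is not, so the family (\ref{modesdef}) need not be admissible for (\ref{optmodes}); the paper's bare assertion that $\langle\phi_i,\phi_j\rangle_{L^2(D)}=\delta_{ij}$ is exactly the unproven step. Your option (ii) --- $X$ is invertible, hence $\mathrm{span}\{UX_{.,1},\dots,UX_{.,N_{DTM}}\}=\mathrm{range}(U)$, so the \emph{subspace} attains the maximum even though the listed generators need not be orthonormal --- is the only defensible repair and matches the theorem's literal claim about the span; option (i), assuming normality, would exclude most data of interest. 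Second, your $\mathcal{C}$ equals $V_0V_0^H$ only when all snapshots have equal norm; without that normalization the dominant invariant subspace of $\mathcal{C}$ and the left singular subspace of $V_0$ are different objects, so an equal-norm hypothesis (or a reweighting of the objective) must be stated. Third, the RSVD reproduces the dominant singular subspace exactly only when $\mathrm{rank}(V_0)\le N_{DTM}$, and otherwise only approximately with high probability. With these caveats made explicit, your argument closes into a correct proof of the subspace statement; as written it is an honest and essentially complete diagnosis of what the theorem can and cannot claim.
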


\begin{proof}

Following the Koopman decomposition assumption \cite{Koopm1931},  we consider that a propagator operator $\mathcal{A}$ exists, that maps every column vector onto the next one, i.e.
\begin{equation}\label{steps1}
\left\{ {{u_0},\;{u_1} = {\mathcal A}{u_0},\;{u_2} = {\mathcal A}{u_1} = {{\mathcal A}^2}{u_0},.\;..,\;{u_{N_t}} = {\mathcal A}{u_{{N_t} - 1}} = {{\mathcal A}^{{N_t}}}{u_0}} \right\}.
\end{equation}

For a sufficiently long sequence of the snapshots, suppose that the last snapshot $u_{N_t}$ can be written as a linear combination of previous
${N_t}$ vectors, such that
\begin{equation}\label{linearcomb}
u_{N_t} = {c_0}{u_0} + {c_1}{u_1} + ... + {c_{{N_t} - 1}}{u_{{N_t} - 1}} + \mathcal{R},
\end{equation}
in which ${{\rm{c}}_i} \in {\rm{\mathbb{R},i  =  0,}}...{\rm{,{N_t}  -  1}}$  and $\mathcal{R}$ is the residual vector.  The following relations are true:
\begin{equation}\label{kryl}
\left\{ {{u_1},{u_2},...{u_{N_t}}} \right\} = {\mathcal A}\left\{ {{u_0},{u_1},...{u_{{N_t} - 1}}} \right\} = \left\{ {{u_1},{u_2},...,V_0c} \right\} + {\mathcal R},
\end{equation}
where $c = {\left( {\begin{array}{*{20}{c}} {{c_0}}&{{c_1}}&{...}&{{c_{{N_t} - 1}}}\end{array}} \right)^T}$ is the unknown column vector.

Thus, the aim is to solve the following eigenvalue problem:
\begin{equation}\label{approx}
V_1 = \mathcal{A}V_0 = V_0\mathcal{S} + \mathcal{R},
\end{equation}
where $\mathcal{S}$ approximates the eigenvalues of $\mathcal{A}$ when ${\left\| \mathcal{R} \right\|_2} \to 0$. This is equivalent to solve the minimization problem:
\begin{equation}\label{minpro}
\mathop {min}\limits_\mathcal{S} \;\mathcal{R} = {\left\| {V_1 - V_0\mathcal{S}} \right\|_2},
\end{equation}
where ${\left\| {\, \cdot \,} \right\|_2}$ is the ${L_2}$-norm of ${\mathbb{R}^{{N_x}}}$.

The solution to the minimization problem (\ref{minpro}) is found in the following manner.
Suppose that ${\rm{r }} \le {\rm{ min}}\left( {{N_x},{N_t}} \right)$.
Then for $1 \le {N_{DTM}} \le r$,
we identify the $N_{DTM}$-RSVD of $V_{0}$, that yields the factorization:
\begin{equation}
V_{0}= U\Sigma {W^H},
\end{equation}
of the snapshot matrix $V_{0}$,
where $U = \left[ {{u_1},...,{u_{{N_{DTM}}}}} \right] \in {\mathbb{R}^{{N_x} \times {N_{DTM}}}}$ and $W = \left[ {{w_1},...,{w_{{N_{DTM}}}}} \right] \in {\mathbb{R}^{{N_t} \times {N_{DTM}}}}$ are orthogonal matrices that
contain the eigenvectors of ${{\rm{V}}_0}{{\rm{V}}_0}^H$ and ${{\rm{V}}_0}^H{{\rm{V}}_0}$, respectively,
 $\Sigma  = diag\left( {{\sigma _1},...,{\sigma _{N_{DTM}}}} \right) \in {\mathbb{R}^{N_{DTM} \times N_{DTM}}}$
is a square diagonal matrix containing the singular values of $V_{0}$ and ${H}$ means the conjugate transpose.

Relations $\mathcal{A}V_0= V_1= V_0S + \mathcal{R}, {\left\| \mathcal{R} \right\|_2}
\to 0$ and $V_0 = U\Sigma {W^H}$ yield:
\[\mathcal{A}U\Sigma {W^H} = V_1 = U\Sigma {W^H}\mathcal{S}\quad \Rightarrow \quad {U^H}\mathcal{A}U\Sigma {W^H} = {U^H}U\Sigma {W^H}\mathcal{S}\quad \Rightarrow \quad \;\mathcal{S} = {U^H}\mathcal{A}U.     \]

     From $\mathcal{A}U\Sigma {W^H} = V_1$ it follows that $\mathcal{A}U = V_1W{\Sigma ^{ - 1}}$ and hence
$\mathcal{S} = {U^H}\left( {V_1W{\Sigma ^{ - 1}}}
\right).$

As a consequence, the solution to the minimization problem (\ref{minpro}) is the matrix operator
\begin{equation}\label{defS}
  \mathcal{S} = {U^H}\left( {V_1W{\Sigma ^{ - 1}}} \right).
\end{equation}

The eigenvalues and eigenvectors of $\mathcal{S}$ will converge toward the eigenvalues and eigenvectors of the Koopman propagator operator $\mathcal{A}$ as a direct result of solving the minimization problem (\ref{minpro}), which improves overall convergence.

Let $X \in {\mathbb{R}^{{N_{DTM}} \times {N_{DTM}}}}$,
$\Lambda  \in {\mathbb{R}^{{N_{DTM}} \times {N_{DTM}}}}$
be the eigenvectors, respectively the eigenvalues of the data propagator matrix $\mathcal{S}$:
\begin{equation}\label{Seig}
  \mathcal{S}X=X\Lambda.
\end{equation}

Let $\Phi=span\left\{ {{\phi _1},{\phi _2},...,{\phi _{{N_{DTM}}}}} \right\}$ be the subspace spanned by the sequence of functions
\begin{equation}\label{md1}
\left\{ {{\phi _i}} \right\}_{i = 1}^{{N_{DTM}}} = \left\langle {U,{X_{.,i}}} \right\rangle _{{L^2}\left( D \right)}
\end{equation}
where $U$ represents the matrix of left singular vectors produced by Randomized Singular Value Decomposition of rank $N_{DTM}$ of data matrix $V_{0}$. It follows that
\begin{equation}
{\left\langle {{\phi _i},{\phi _j}} \right\rangle }_{ {L^2}\left( D \right)} = {\delta _{ij}},\quad 1 \le i \le j \le {N_{DTM}}
\end{equation}
i.e., $\Phi$ forms an orthogonal base to the data space. The $\Phi$ base vectors maximize their projection on the data space and they represent the solution to the constrained optimization problem (\ref{optimprob}), therefore they produce the twin data model with the expression given by Eq.(\ref{dtm}).

\end{proof} %Theorem 1

\begin{corollary}
The base vectors $\left\{ {{\phi _i}} \right\}_{i = 1}^{{N_{DTM}}}$ defined by Eq.(\ref{modesdef}), respectively their corresponding modal coefficients $\left\{ {{a_i}} \right\}_{i = 1}^{{N_{DTM}}} = \left\langle {U,{X_{.,i}}} \right\rangle _{{L^2}\left( D \right)}$, solve the multiobjective constrained minimization problem (\ref{optimprob}).

\end{corollary}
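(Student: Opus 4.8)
The plan is to reduce the two competing objectives of (\ref{optimprob}) to the single projection–maximization objective (\ref{optmodes}) that Theorem \ref{Th1} already solves, so that the basis (\ref{modesdef}) turns out to be simultaneously optimal for both. The central tool is the orthogonal decomposition in the Hilbert space $L^2(D)$: for any orthonormal family $\{\phi_j\}_{j=1}^{N_{DTM}}$ spanning a subspace $\Phi$, the orthogonal projector $P_\Phi u = \sum_j \langle u,\phi_j\rangle_{L^2(D)}\,\phi_j$ furnishes the best $L^2$ approximation of $u$ in $\Phi$, and the Pythagorean identity gives $\|u-P_\Phi u\|^2 = \|u\|^2 - \|P_\Phi u\|^2$.

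First I would settle the modal amplitudes. For a fixed orthonormal basis, the reconstruction error in the first objective is minimized, snapshot by snapshot, exactly when the coefficients are the projection coefficients $a_j(t_i) = \langle u(\cdot,t_i),\phi_j\rangle_{L^2(D)}$, by the best–approximation property of $P_\Phi$; these are the amplitudes asserted in the statement once (\ref{md1}) is used to identify $\phi_j$ (the normalization in (\ref{modesdef}) being absorbed into the coefficient). With these amplitudes the reconstruction equals $P_\Phi u$, so by Pythagoras the first objective becomes $\sum_i\bigl(\|u_i^0\|^2 - \|P_\Phi u_i^0\|^2\bigr)$; since $\sum_i\|u_i^0\|^2$ does not depend on the basis, minimizing it is equivalent to maximizing the total projection energy $\sum_i\|P_\Phi u_i^0\|^2$.

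Next I would treat the second objective, interpreting the normalized functional as a squared correlation (the ``highest correlation'' criterion of the introduction). Writing $\hat u = P_\Phi u$ and using $\langle u,\hat u\rangle = \|\hat u\|^2$, the defining property of an orthogonal projection, the ratio collapses to $|\langle u,\hat u\rangle|^2/(\|u\|^2\|\hat u\|^2) = \|P_\Phi u\|^2/\|u\|^2$, so minimizing its negative is again, for fixed data, equivalent to maximizing $\sum_i\|P_\Phi u_i^0\|^2$. Both objectives therefore reduce to the same projection–maximization; any basis that maximizes this energy is simultaneously optimal for each objective, hence is an achievable ideal point and a fortiori a Pareto solution of the multiobjective problem. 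I would then conclude by invoking Theorem \ref{Th1}, whose optimizer (\ref{modesdef}) maximizes precisely the projection on the data space.

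The hard part will be the exact identification of the projection energy $\sum_i\|P_\Phi u_i^0\|^2 = \sum_i\sum_j|\langle u_i^0,\phi_j\rangle|^2$ with the objective $\sum_j\sum_i\|P_{u_i^0}\phi_j\|^2 = \sum_j\sum_i|\langle\phi_j,u_i^0\rangle|^2/\|u_i^0\|^2$ of (\ref{optmodes}): the two differ by the snapshot weights $1/\|u_i^0\|^2$ carried by the projection operator (\ref{proj}). I would close this gap by showing that the maximizing subspace is the same for the weighted and the unweighted energies, namely the dominant left–singular/Koopman subspace $\mathrm{span}\{\phi_i\}$ delivered by the $N_{DTM}$-RSVD, so that the optimizer provided by Theorem \ref{Th1} does in fact maximize the unweighted energy appearing in both objectives of (\ref{optimprob}); absent such a common maximizer, one would instead have to establish optimality directly for the weighted functionals, which is the delicate point.
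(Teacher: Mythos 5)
For calibration: the paper offers no argument for this corollary at all --- it is stated bare, resting entirely on the closing sentence of the proof of Theorem \ref{Th1}, where optimality for (\ref{optimprob}) is asserted rather than derived --- so your attempt to actually build the bridge from (\ref{optimprob}) to (\ref{optmodes}) already does more than the paper. But the bridge breaks at the step you half-acknowledge, and the break is worse than you think. Run your own algebra to the end: with $\hat u_i = P_\Phi u_i^0$ and $\langle u_i^0,\hat u_i\rangle_{L^2(D)} = \|\hat u_i\|^2_{L^2(D)}$, the correlation term for snapshot $i$ collapses to $\|P_\Phi u_i^0\|^2/\|u_i^0\|^2$, so your second objective asks you to maximize the \emph{weighted} energy $\sum_i\sum_j |\langle \phi_j,u_i^0\rangle|^2/\|u_i^0\|^2$ --- which is exactly the functional of (\ref{optmodes}), weights included --- and this is \emph{not} equivalent to maximizing $\sum_i \|P_\Phi u_i^0\|^2$: the weights $1/\|u_i^0\|^2$ depend on $i$ and cannot be pulled out of the sum (``fixed data'' makes them fixed numbers, not equal numbers). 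Meanwhile your Pythagoras argument reduces the first objective to the \emph{unweighted} energy $\sum_i\sum_j|\langle\phi_j,u_i^0\rangle|^2$. So the two objectives of (\ref{optimprob}) are genuinely different functionals, one weighted and one not; your central claim that ``both objectives reduce to the same projection-maximization'' is false, and with it falls the ideal-point argument. At best you obtain that the Theorem \ref{Th1} basis optimizes the second objective, not simultaneous optimality for both.

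Your proposed repair --- prove that the weighted and unweighted energies are maximized by the same subspace --- is not a delicate technical point but a claim that is false in general: maximizing the unweighted energy over orthonormal families of size $N_{DTM}$ selects the dominant left singular subspace of $V_0$, while maximizing the weighted energy selects that of the column-normalized matrix $V_0\,\mathrm{diag}\left(1/\|u_i^0\|\right)$, and these differ whenever the snapshot norms vary, as they certainly do for decaying Burgers data. The paper itself implicitly concedes the conflict: Section 4 computes a nontrivial Pareto front for (\ref{optimprob}) by a genetic algorithm, which would be pointless if the two objectives shared a common maximizer --- under your reduction the front would collapse to a single point. Two smaller mismatches: (\ref{optimprob}) also optimizes over $N_{DTM}$, which you silently freeze; and the corollary's coefficients $a_i = \langle U,X_{.,i}\rangle_{L^2(D)}$ carry no time index, so they cannot literally be the per-snapshot projection coefficients $a_j(t_i)=\langle u(\cdot,t_i),\phi_j\rangle_{L^2(D)}$ that your best-approximation step requires; ``absorbing the normalization'' does not resolve that. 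In short, your outline (reduce (\ref{optimprob}) to (\ref{optmodes}), then cite Theorem \ref{Th1}) is the only sensible route and is what the paper implicitly gestures at, but the reduction genuinely fails for the first objective, and neither your proposal nor the paper closes that gap.
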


\begin{corollary}
If
\begin{equation}\label{projDTM}
\left\| {{P_{{V_0}}}\Phi } \right\|_{{L^2}\left( D \right)}^2 = \frac{1}{{{N_{DTM}}}}\sum\limits_{i = 1}^{{N_{DTM}}} {\sum\limits_{j = 1}^{{N_t}} {\left\| {{P_{u_j^0}}{\phi _i}} \right\|_{{L^2}\left( D \right)}^2} }
\end{equation}
represents the mean squared sum of the norms of projections of shape modes $\left\{ {{\phi _i}} \right\}_{i = 1}^{{N_{DTM}}}$ produced by Randomized Orthogonal Decomposition
on the data space $V_0$, and
\begin{equation}\label{projF}
  \left\| {{P_{{V_0}}}\Psi } \right\|_{{L^2}\left( D \right)}^2 = \frac{1}{{{N_x}}}\sum\limits_{i = 1}^{{N_x}} {\sum\limits_{j = 1}^{{N_t}} {\left\| {{P_{u_j^0}}{\psi _i}} \right\|_{{L^2}\left( D \right)}^2} }
\end{equation}
represents the mean squared sum of the norms of projections in the case of the Fourier empirical orthogonal modes  $\left\{ {{\psi _i}} \right\}_{i = 1}^{{N_x}}$ on the same data space, then
\begin{equation}\label{compproj}
\left\| {{P_{{V_0}}}\Phi } \right\|_{{L^2}\left( D \right)}^2 > \left\| {{P_{{V_0}}}\Psi } \right\|_{{L^2}\left( D \right)}^2.
\end{equation}
\end{corollary}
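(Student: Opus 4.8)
The plan is to show that the two sides of (\ref{compproj}) are both averages of the eigenvalues of one fixed positive semidefinite correlation matrix, where the ROD side averages only the largest eigenvalues and the Fourier side averages all of them. First I would rewrite the squared projection norm. Setting $\hat u_j^0 = u_j^0/\|u_j^0\|_{L^2(D)}$ for the normalized snapshots, the projection operator (\ref{proj}) gives $\|P_{u_j^0}\phi_i\|_{L^2(D)}^2 = \langle \phi_i,\hat u_j^0\rangle_{L^2(D)}^2$, so that $\sum_{j=1}^{N_t}\|P_{u_j^0}\phi_i\|_{L^2(D)}^2 = \phi_i^{T}C\phi_i$, where $C = \sum_{j=1}^{N_t}\hat u_j^0(\hat u_j^0)^{T}\in\mathbb{R}^{N_x\times N_x}$ is symmetric and positive semidefinite. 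Let $\mu_1\ge\mu_2\ge\dots\ge\mu_{N_x}\ge 0$ be its eigenvalues; since each $\hat u_j^0$ is a unit vector, $\mathrm{trace}(C)=\sum_{i=1}^{N_x}\mu_i=N_t$.

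For the Fourier side I would invoke completeness. The columns $\{\psi_i\}_{i=1}^{N_x}$ of the orthogonal matrix $\Psi$ form an orthonormal basis of $\mathbb{R}^{N_x}$, so Parseval's identity yields $\sum_{i=1}^{N_x}\langle\psi_i,\hat u_j^0\rangle_{L^2(D)}^2=\|\hat u_j^0\|_{L^2(D)}^2=1$ for every $j$, i.e. $\sum_{i=1}^{N_x}\|P_{u_j^0}\psi_i\|_{L^2(D)}^2=1$. Substituting into (\ref{projF}) and summing over the $N_t$ snapshots gives the exact value $\|P_{V_0}\Psi\|_{L^2(D)}^2 = N_t/N_x = \frac{1}{N_x}\sum_{i=1}^{N_x}\mu_i$, the plain average of all $N_x$ eigenvalues of $C$.

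For the ROD side I would use the optimality established in Theorem \ref{Th1}. Collecting the modes into $\Phi=[\phi_1,\dots,\phi_{N_{DTM}}]$ with orthonormal columns, the objective of (\ref{optmodes}) equals $\mathrm{trace}(\Phi^{T}C\Phi)$, and by Theorem \ref{Th1} the ROD modes attain its maximum over all orthonormal $N_{DTM}$-frames. By the Ky Fan maximum principle this maximum is exactly the sum of the $N_{DTM}$ largest eigenvalues of $C$, so that (\ref{projDTM}) becomes $\|P_{V_0}\Phi\|_{L^2(D)}^2 = \frac{1}{N_{DTM}}\sum_{i=1}^{N_{DTM}}\mu_i$, the average of the top $N_{DTM}$ eigenvalues.

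Inequality (\ref{compproj}) then reduces to the elementary fact that, for a decreasing sequence, the mean of the leading $N_{DTM}$ terms dominates the mean of all $N_x$ terms: after cross-multiplying this is $(N_x-N_{DTM})\sum_{i=1}^{N_{DTM}}\mu_i \ge N_{DTM}\sum_{i=N_{DTM}+1}^{N_x}\mu_i$, which holds because every $\mu_i$ with $i\le N_{DTM}$ is at least $\mu_{N_{DTM}}\ge\mu_{N_{DTM}+1}\ge$ every later $\mu_i$. The main obstacle is the strictness of (\ref{compproj}), since equality in the averaging bound would force $\mu_1=\dots=\mu_{N_x}$, that is, $C$ a scalar multiple of the identity. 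I would rule this out from the reduced structure of the data: because $N_{DTM}<N_x$ and the snapshots span a space of dimension $r\le\min(N_x,N_t)$, the matrix $C$ is rank-deficient whenever $r<N_x$, so that $\mu_{N_x}=0<\mu_1$ (using $\mathrm{trace}(C)=N_t>0$); the eigenvalues therefore cannot all coincide and the inequality is strict.
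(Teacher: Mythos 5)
The first thing to say is that the paper offers no proof of this corollary at all: it is stated bare, between Theorem \ref{Th1} and the numerical section, and is only ``demonstrated'' empirically in Table \ref{qual2}. So your argument cannot agree or disagree with the author's route --- it fills a genuine hole, and what you propose is the natural argument. Most of it is correct. From (\ref{proj}) one indeed gets $\left\| P_{u_j^0}\phi_i \right\|_{L^2(D)}^2 = \langle \phi_i, \hat u_j^0 \rangle_{L^2(D)}^2$, so both (\ref{projDTM}) and (\ref{projF}) are normalized traces of the quadratic form induced by $C=\sum_{j}\hat u_j^0 (\hat u_j^0)^T$, with $\mathrm{trace}(C)=N_t$. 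Parseval gives the Fourier side the exact value $N_t/N_x$ --- and, as your computation shows, this value is the same for \emph{any} orthonormal basis of $\mathbb{R}^{N_x}$, which reveals that the corollary is really about truncation to a dominant subspace, not about Fourier modes specifically. Invoking Theorem \ref{Th1} for the optimality of the ROD frame and the Ky Fan maximum principle to evaluate that optimum as $\sum_{i=1}^{N_{DTM}}\mu_i$ is legitimate reasoning for a corollary, though you should be aware that the paper's proof of Theorem \ref{Th1} never actually verifies the claimed maximality (it only constructs the modes), so your identification of the ROD side with the mean of the top eigenvalues is only as solid as that theorem.

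The one genuine gap is the one you flagged yourself: strictness. Your averaging argument yields only $\ge$, with equality precisely when $\mu_1=\cdots=\mu_{N_x}$, i.e. when $C=(N_t/N_x)I$. Your way of excluding this --- rank deficiency of $C$ --- requires $r<N_x$, but the corollary assumes nothing of the sort, and the paper's own experiment has $N_t=300 \ge N_x=101$, so full-rank data is entirely possible. The degenerate case is realizable: take $N_t=N_x$ mutually orthogonal snapshots, so that $C=I$; then both sides of (\ref{compproj}) equal $1$ and the strict inequality fails. So what you have actually proved is that $\left\| P_{V_0}\Phi \right\|_{L^2(D)}^2 \ge \left\| P_{V_0}\Psi \right\|_{L^2(D)}^2$ always (granting Theorem \ref{Th1}), with strict inequality if and only if $C$ is not a scalar multiple of the identity. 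That is the correct statement; the corollary as printed is slightly too strong, and your write-up should state the non-degeneracy hypothesis explicitly rather than the sufficient-but-not-necessary condition $r<N_x$.
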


This means that, for the purpose of twin modelling, the ROD shape modes defined by Eq.(\ref{modesdef}) are qualitatively superior to Fourier empirical modes.

\section{Mathematical model and the exact solution}\label{model}

The experimental data are provided by the simulation of the nonlinear
viscous Burgers equation model:
\begin{equation}\label{burgers}
\left\{ \begin{array}{l}
\frac{\partial }{{\partial t}}u\left( {x,t} \right) + \frac{\partial }{{\partial x}}\left( {\frac{{u{{\left( {x,t} \right)}^2}}}{2}} \right) = \nu \frac{{{\partial ^2}}}{{\partial {x^2}}}u\left( {x,t} \right),\quad t > 0,\quad \nu  > 0,\\
u\left( {x,0} \right) = {u_0}\left( x \right),\quad x \in \mathbb{R},
\end{array} \right.
\end{equation}
where $u\left( {x,t} \right)$ is the unknown function of time $t$, $\nu $ is the viscosity parameter.
The initial condition of the following form is considered:
\begin{equation}\label{test1}
{u_0}\left( x \right) =  - \sin \left( {\pi x} \right),
\end{equation}
The homogeneous Dirichlet boundary conditions of the form
\begin{equation}\label{boundcond}
u\left( {0,t} \right) = u\left( {L,t} \right) = 0
\end{equation}
are also applied to the model.

The nonlinear evolution governed by the Burgers equation is obtained with the help of the Cole–Hopf transformation defined by:
\begin{equation}\label{cole}
u =  - 2\nu \frac{1}{\varphi }\frac{{\partial \varphi }}{{\partial x}}.
\end{equation}

Through an analytical handling it is found that:
\begin{equation}\label{e1}
\frac{{\partial u}}{{\partial t}} = \frac{{2\nu }}{{{\varphi ^2}}}\left( {\frac{{\partial \varphi }}{{\partial t}}\frac{{\partial \varphi }}{{\partial x}} - \varphi \frac{{{\partial ^2}\varphi }}{{\partial x\partial t}}} \right),\quad u\frac{{\partial u}}{{\partial x}} = \frac{{4{\nu ^2}}}{{{\varphi ^3}}}\frac{{\partial \varphi }}{{\partial x}}\left( {\varphi \frac{{{\partial ^2}\varphi }}{{\partial {x^2}}} - \frac{{\partial \varphi }}{{\partial x}}\frac{{\partial \varphi }}{{\partial x}}} \right),
\end{equation}
\begin{equation}\label{e2}
\nu \frac{{{\partial ^2}u}}{{\partial {x^2}}} =  - \frac{{2{\nu ^2}}}{{{\varphi ^3}}}\left( {2{{\left( {\frac{{\partial \varphi }}{{\partial x}}} \right)}^3} - 3\varphi \frac{{{\partial ^2}\varphi }}{{\partial {x^2}}}\frac{{\partial \varphi }}{{\partial x}} + {\varphi ^2}\frac{{{\partial ^3}\varphi }}{{\partial {x^3}}}} \right).
\end{equation}

Substituting these expressions into (\ref{burgers}) it follows that
\begin{equation}\label{e3}
\frac{{\partial \varphi }}{{\partial x}}\left( {\frac{{\partial \varphi }}{{\partial t}} - \nu \frac{{{\partial ^2}\varphi }}{{\partial {x^2}}}} \right) = \varphi \left( {\frac{{{\partial ^2}\varphi }}{{\partial x\partial t}} - \nu \frac{{{\partial ^3}\varphi }}{{\partial {x^3}}}} \right) = \varphi \frac{\partial }{{\partial x}}\left( {\frac{{\partial \varphi }}{{\partial t}} - \nu \frac{{{\partial ^2}\varphi }}{{\partial {x^2}}}} \right).
\end{equation}

Relation (\ref{e3}) indicates that if $\varphi $ solves the heat equation, then $u\left( {x,t} \right)$ given by the Cole-Hopf transformation (\ref{cole}) solves the viscid Burgers equation (\ref{burgers}). Thus the viscid Burgers equation (\ref{burgers}) is reduced to the following one:
\begin{equation}\label{heat}
\left\{ \begin{array}{l}
\frac{{\partial \varphi }}{{\partial t}} - \nu \frac{{{\partial ^2}\varphi }}{{\partial {x^2}}} = 0,\quad x \in R,t > 0,\nu  > 0,\\
\varphi \left( {x,0} \right) = {\varphi _0}\left( x \right) = {e^{ - \int_0^x {\frac{{{u_0}(\xi )}}{{2\nu }}d\xi } }},x \in \mathbb{R}.
\end{array} \right.
\end{equation}

Taking the Fourier transform with respect to $x$ for both heat equation and the initial condition (\ref{heat}), the analytic solution is obtained in the following form:
\begin{equation}\label{solphi}
\varphi \left( {x,t} \right) = \frac{1}{{2\sqrt {\pi \nu t} }}\int\limits_{ - \infty }^\infty  {{\varphi _0}\left( \xi  \right)} \,{e^{ - \frac{{{{(x - \xi )}^2}}}{{4\nu t}}}}d\xi .
\end{equation}

From the Cole-Hopf transformation (\ref{cole}) we obtain the analytic solution to the problem (\ref{burgers}) in the following form:
\begin{equation}\label{exactsolu}
u\left( {x,t} \right) = \frac{{\int_{ - \infty }^\infty  {\frac{{x - \xi }}{t}{\varphi _0}\left( \xi  \right){e^{ - \frac{{{{\left( {x - \xi } \right)}^2}}}{{4\nu t}}}}d\xi } }}{{\int_{ - \infty }^\infty  {{\varphi _0}\left( \xi  \right){e^{ - \frac{{{{\left( {x - \xi } \right)}^2}}}{{4\nu t}}}}d\xi } }}.
\end{equation}

The exact solution (\ref{exactsolu}) with the initial condition (\ref{test1}) is computed using the Gauss-Hermite Quadrature \cite{Brass2011}. Gauss–Hermite quadrature approximates the value of integrals of the following kind:
\begin{equation}\label{gh}
\int\limits_{ - \infty }^\infty  {f\left( z \right)} \,{e^{ - {z^2}}}dz \approx \sum\limits_{i = 1}^n {{w_i}f\left( {{x_i}} \right)} ,
\end{equation}
where $n$ represents the number of sample points used, ${x_i}$ are the roots of the Hermite polynomial ${H_n}\left( x \right)$ and the associated weights ${w_i}$ are given by
\begin{equation}\label{weights}
{w_i} = \frac{{{2^{n - 1}}n!\sqrt \pi  }}{{{n^2}{{\left( {{H_{n - 1}}\left( {{x_i}} \right)} \right)}^2}}},\quad i = 1,...,n.
\end{equation}

The initial condition (\ref{test1}) leads to
\begin{equation}\label{ex1phi0}
{\varphi _0}\left( x \right) = {e^{ - \frac{1}{{2\nu }}\int_0^x {{u_0}(\xi )d\xi } }} = {e^{ - \frac{1}{{2\nu }}\int_0^x { - \sin \left( {\pi \xi } \right)d\xi } }} = {e^{\frac{1}{{2\nu \pi }}}} \cdot {e^{ - \frac{{\cos \left( {\pi x} \right)}}{{2\nu \pi }}}}.
\end{equation}

The exact solution (\ref{exactsolu}) is written in the following form:
\begin{equation}\label{sol1}
u\left( {x,t} \right) = \frac{{\int_{ - \infty }^\infty  {\frac{{x - \xi }}{t}{e^{ - \frac{{\cos \left( {\pi \xi } \right)}}{{2\nu \pi }}}} \cdot {e^{ - {{\left( {\frac{{x - \xi }}{{\sqrt {4\nu t} }}} \right)}^2}}}d\xi } }}{{\int_{ - \infty }^\infty  {{e^{ - \frac{{\cos \left( {\pi \xi } \right)}}{{2\nu \pi }}}} \cdot {e^{ - {{\left( {\frac{{x - \xi }}{{\sqrt {4\nu t} }}} \right)}^2}}}d\xi } }}.
\end{equation}

Introducing the variable change
\begin{equation}\label{varch}
z = \frac{{x - \xi }}{{\sqrt {4\nu t} }},
\end{equation}
the exact solution to the Burgers equation model (\ref{burgers}) with the initial condition (\ref{test1}) is
\begin{equation}\label{exsol1}
u\left( {x,t} \right) = \frac{{\int_{ - \infty }^\infty  {4\nu z\;{e^{ - \frac{1}{{2\nu \pi }}\cos \left[ {\pi \left( {x - z\sqrt {4\nu t} } \right)} \right]}}{e^{ - {z^2}}}dz} }}{{\int_{ - \infty }^\infty  {\sqrt {4\nu t} \;{e^{ - \frac{1}{{2\nu \pi }}\cos \left[ {\pi \left( {x - z\sqrt {4\nu t} } \right)} \right]}}{e^{ - {z^2}}}dz} }}.
\end{equation}

The computational domain considered is $\left[ {0,L} \right]$, where $L=2$, the computational time is $\left[ {0,T} \right]$, where $T=3$, viscosity parameter in the Burgers equation model is  $\nu  = {10^{ - 2}}$. The computational domain is uniformly discretized by using $N=100$ grid points, which yields a mesh-size $\Delta x = 0.02$.

Eq. (\ref{test1}), representing the initial condition, yields a sinusoidal pulse with an abrupt change of slope at the extremities of the domain. Figure \ref{exact} illustrates the exact solution computed with the technique of the Gauss-Hermite quadrature with $n=100$ nodes.
\begin{figure}
\centering
\includegraphics[scale=1.3]{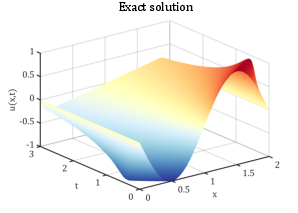}
\caption{The exact solution of the viscid Burgers equation model (\ref{burgers}), with initial condition (\ref{test1})\label{exact}}
\end{figure}

\section{The Twin Data Model. Qualitative analysis}

As a high fidelity substitute model for the precise solution of the Burgers equation model under investigation here, the twin data model (\ref{dtm}) is developed utilizing the randomized orthogonal decomposition (ROD).

The training data comprises of
${N_t}=300$ total number of snapshots taken in time at regularly spaced time intervals $\Delta t = 0.01$,
${N_x}=101$ number of spatial measurements per time snapshot.

The optimal dimension $N_{DTM}=10$ of the leading shape modes space is determined as the solution to the multiobjective optimization problem with nonlinear constraints  (\ref{optimprob}). ROD algorithm solves this problem and finds Pareto front of the two fitness functions using a genetic algorithm. Figure \ref{Pareto} illustrates the objectives of the optimization problem (\ref{optimprob}) and the Pareto front solution, for the considered test case.
\begin{figure}
\centering
\includegraphics[scale=1.3]{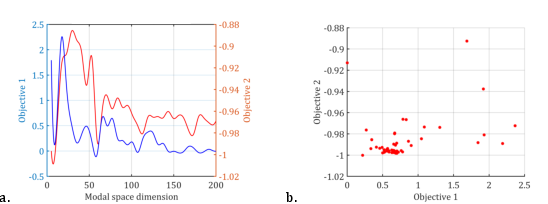}
\caption{The objectives of the optimization problem (\ref{optimprob}) and the Pareto front solution obtained by genetic algorithm \label{Pareto} }
\end{figure}

Figure \ref{dtm1} illustrates the twin data model produced with ROD. The figure presents also the leading shape modes and the corresponding modal amplitudes that contribute to the assembly of the model.
\begin{figure}
\centering
\includegraphics[scale=1.2]{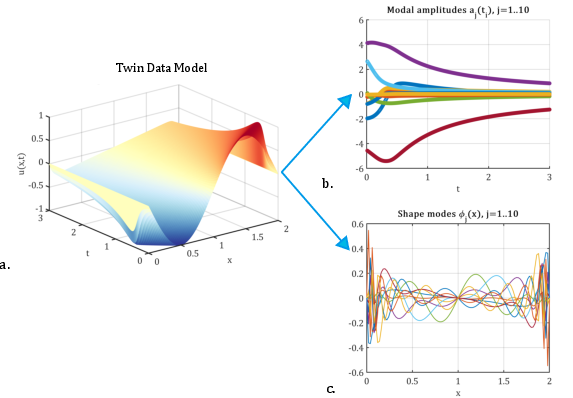}
\caption{a.The twin data model as the solution of ROD algorithm; b.The modal amplitudes; c.The corresponding leading shape modes \label{dtm1} }
\end{figure}

We introduce ${\left\langle  \cdot  \right\rangle _T}$ as a time average operator over $\left[ {{t_1},T} \right]$ corresponding to the arithmetic time-average of equally spaced elements of the interval $\left[ {{t_1},T} \right]$:
\begin{equation}\label{Tnorm}
{\left\langle {f\left( t \right)} \right\rangle _T} = \frac{1}{{{N_t}}}\sum\limits_{i = 1}^{{N_t}} {f\left( {{t_i}} \right)} ,\quad {t_i} \in \left\{ {{t_1},{t_2},...,{t_{{N_t}}} = T} \right\}.
\end{equation}

The absolute error between the exact solution and the twin data model is defined by relation:
\begin{equation}\label{eror}
Erro{r^{DTM}} = {\left\langle {{{\left\| {u\left( {x,t} \right) - {u^{DTM}}\left( {x,t} \right)} \right\|}_2}} \right\rangle _T},\quad t \in \left[ {{t_1},T} \right].
\end{equation}

The correlation coefficient  is used to validate the quality of the
 twin data model over the exact solution and has the following expression:
\begin{equation}\label{corr}
Cor{r^{DTM}} = {\left\langle {\frac{{{{\left\| {u\left( {x,t} \right){u^{DTM}}\left( {x,t} \right)} \right\|}_2}^2}}{{{{\left\| {u{{\left( {x,t} \right)}^2}} \right\|}_2}{{\left\| {{u^{DTM}}{{\left( {x,t} \right)}^2}} \right\|}_2}}}} \right\rangle _T},\quad t \in \left[ {{t_1},T} \right].
\end{equation}

In order to perform a qualitative analysis of the twin data model, Table \ref{qual1} presents the space dimension of the leading shape modes computed by the ROD, the absolute error given by Eq.(\ref{eror}) and the correlation coefficient given by Eq.(\ref{corr}) between the exact solution and the twin data model. Analysing the results presented in Table \ref{qual1}, it is obvious that the ROD algorithm creates a model that is perfectly correlated with the original data (i.e. $Cor{r^{DTM}}=1$), having the absolute error of order $\mathcal{O}({10^{-7}}) $.

Table \ref{qual2} presents the mean squared sum of the projection norms and highlights that the ROD shape modes maximize their projection compared to the Fourier modes. It was demonstrated that the leading modes computed by Randomized Orthogonal Decomposition are qualitatively superior to the Fourier modes.
\begin{center}
\begin{table}[h]
\caption{Qualitative analysis of the twin data model\label{qual1}}
\noindent \fontsize{7}{8}\selectfont
\centering
\begin{tabular}{ | p{3.3cm} | p{3.3cm} | p{3.3cm} |  }
\hline
\multirow{1}{*}{DTM complexity} & \multirow{1}{*}{DTM Absolute error} & \multirow{1}{*}{DTM Correlation} \\
\hline
$N_{DTM}=10$ & ${\rm{8}}{\rm{.7264}} \times {10^{ - 7}}$ & $1.0000$ \\
\hline
\end{tabular}
\end{table}
\end{center}
\begin{center}
\begin{table}
\caption{Qualitative analysis of the shape modes projection\label{qual2}}
\noindent \fontsize{7}{8}\selectfont
\centering
\begin{tabular}{ | p{5.0cm} | p{5.0cm} |   }
\hline
\multirow{1}{*}{ ROD Shape Modes Projection Norm  } & \multirow{1}{*}{ Fourier Modes Projection Norm   } \\
\hline
$\left\| {{P_{{V_0}}}\Phi } \right\|_{{L^2}\left( D \right)}^2=56.6294 $			& $\left\| {{P_{{V_0}}}\Psi } \right\|_{{L^2}\left( D \right)}^2=3.5029$   \\
\hline
\end{tabular}
\end{table}
\end{center}

\section{Conclusions}

A topic of significant interest to data scientists has been the focus of the current study.
The technique of Randomized Orthogonal Decomposition (ROD) was presented in this study in order to produce twin data models of lower complexity that accurately reflect the dynamics of fluid flows. Details of the mathematical framework were provided.

It has been demonstrated that when a multiobjective optimization problem is formulated, randomized orthogonal decomposition outperforms Fourier techniques and reduces the projection error.
It was established that ROD-generated leading shape modes are qualitatively superior than Fourier empirical modes in the sense that ROD modes maximize their projection on the data space.
The twin data model produced using the current method has an exact correlation to the original data.
Traditional approaches that are substantially more computationally expensive and do not necessarily have high precision, such as adjoint model reduction, POD-DEIM, or Galerkin projection methods, can be effectively avoided by Randomized Orthogonal Decomposition.
The suggested approach will be tested on two-dimensional datasets with applications from other domains in a subsequent study.

\end{document}